\documentclass[12pt]{iopart}

\usepackage{amsmath}

\usepackage{dsfont}
\usepackage{mathrsfs}
\usepackage{lmodern}
\usepackage[T1]{fontenc}
\usepackage[latin1]{inputenc}

\usepackage{iopams}

\usepackage[amsmath,thmmarks]{ntheorem}

\hfuzz30pt 


\newtheorem{theorem}{Theorem}[section]
\newtheorem{lemma}[theorem]{Lemma}

\newtheorem{definition}[theorem]{Definition}
\newtheorem{assumption}[theorem]{Assumption}
\newtheorem{remark}[theorem]{Remark}

\theoremstyle{nonumberbreak}
\theorembodyfont{\normalfont} %
\theoremsymbol{$\Box$}
\newtheorem{proof}{Proof}


\renewcommand{\(}{\left(}
\renewcommand{\)}{\right)}

\newcommand{\<}{\left<}
\renewcommand{\>}{\right>}

 \nonfrenchspacing


\newcommand{\IN}          {\mathds{N}}                                        

\newcommand{\SpX}         {\mathcal{X}}
\newcommand{\SpY}         {\mathcal{Y}}

\newcommand{\VarX}        {x}                                                 
\newcommand{\VarY}        {y}                                                 
\newcommand{\Err}         {\xi}

\newcommand{\Op}          {A}                                                 


\newcommand{\BasX}        {u}                                                 

\newcommand{\Fsing}       {\sigma}                                            




\newcommand{\E}           {\mathbb{E}}
\newcommand{\PP}          {\mathbb{P}}


\newcommand{\argmin}      {\operatorname*{argmin}}


\begin{document}

\title{Parameter Choice by Fast Balancing}
\author{Frank Bauer}


\eads{frank.bauer.de@gmail.com}

\date{This version: \today}

\begin{abstract}
Choosing the regularization parameter for inverse problems is of major importance for the performance of the regularization method.

We will introduce a fast version of the Lepskij balancing principle and show that it is a valid parameter choice method for Tikhonov regularization both in a deterministic and a stochastic noise regime as long as minor conditions on the solution are fulfilled.
\end{abstract}

\ams{47A52,65J22,60G99,62H12}

\maketitle

\section{Introduction}

In the following we will consider linear inverse problems \cite{Engl/Hanke/Neubauer:1996,Hofmann:1986}
given as an operator equation
\begin{equation}\label{main}
 \Op \VarX = \VarY,
\end{equation}
where $\Op :\SpX\to \SpY$ is a linear, continuous,
compact operator acting between separable real infinite dimensional Hilbert spaces
$\SpX,\SpY$. Without loss of generality, we assume that $\Op$ has a trivial null-space $N(\Op) = \{ 0 \}$.  Since $\Op$
is compact and $\SpX$ is infinite dimensional, $\Op$ does not have a continuous inverse, which makes
\eqref{main} ill-posed.

For some definitions, but not for the methods themselves, we will need the singular value decomposition of $\Op$. There exist orthonormal bases $(u_k)_{k \in \IN}$ of $\SpX$
and  $(v_k)_{k \in \IN}$ of $\SpY$ and a sequence of decreasing singular values $(\Fsing_k)_{k \in \IN}$ such that
\begin{equation}
 \Op x = \sum_{k=1}^\infty \Fsing_k \<x,u_k\> v_k.
 \end{equation}
Moreover, we assume that the data $\VarY$ are noisy, the noise model for $\Err$ will be specified later.
\begin{equation}\label{eq1}
\VarY^\delta = \Op \VarX + \Err ,\quad\text{$\Err$ noise}.
\end{equation}
In order to counter the ill-posedness, we need to regularize; in this article we will consider only Tikhonov regularization:
\begin{equation}\label{eq2}
 \VarX_n^\delta = \Op_n^{-1} \VarY^\delta := (\Op^* \Op + q_0 q^n)^{-1} \Op \VarY^\delta
\end{equation}
The level $n$ will now be called regularization parameter; $q_0 > 0$ and $0<q<1$ are constants which are discussed later. The noise-free regularized solution is defined as
 \begin{equation}\label{eq3}
\VarX_n = \Op_n^{-1} \VarY = (\Op^* \Op + q_0 q^n)^{-1} \Op \VarY
\end{equation}
The correct choice of the regularization parameter is of major importance for the performance of the method.
In recent times, a number of articles \cite{Goldenshluger/Pereverzev:2000,Mathe/Pereverzev:2003,Bauer/Pereverzev:2005,Bauer/Hohage:2005,Mathe/Pereverzev:2006,Haemarik/Palm/Raus:2007,Bauer/Hohage/Munk:2009} have considered the Lepskij Balancing principle \cite{Lepskij:1990} for choosing this parameter in various situations.

One of the major disadvantages of this parameter choice method in comparison to, for
instance, the Morozov Discrepancy principle (cf. e.g., \cite{Engl/Hanke/Neubauer:1996})
is that one needs to compute all regularized solutions up to a maximal regularization
parameter. On the other hand, this buys stability even in the face of stochastic noise.

We will show that a simplification of the Lepskij balancing principle (now called fast balancing) will yield a valid parameter choice method which performs at least as well as the original. This idea has already been presented in a different form in \cite{Raus/Haemarik:2008}, however in a purely deterministic setting with a focus on convergence results. In contrast our goal is to provide feasible error bounds in realistic situations without requiring the noise level to be near to or converge to $0$. There are methods (see e.g. \cite{Haemarik/Palm/Raus:2008}) which reach by a combination of solutions a better solution, however as the impact to practice has been rather small still, we will keep our focus to choose the best solution in a set of given ones.

The outline of this paper is as follows. First (section \ref{prerequisites}) we will specify the conditions on the solution $\VarX$ and describe two different scenarios of noise, namely a stochastic and a deterministic one. In section \ref{fastbalancing} we will introduce the Lepskij balancing principle and the fast balancing principle. In the two following sections \ref{deterministic} and \ref{stochastic} we will show oracle inequalities for the new method.

\section{Prerequisites}\label{prerequisites}

For Tikhonov regularization it always holds that $\| \VarX - \VarX_{n+1} \|  \leq  \| \VarX - \VarX_{n} \|$. We will need a slightly more powerful inequality at this point and therefore use a set of assumptions introduced in \cite{Bauer/Kindermann:2008}.
\begin{assumption}\label{assumption:x}
Let $\VarX$ such that either (see \cite{Bauer/Kindermann:2008} eq. (9), slightly
rewritten)
\begin{equation*}
\| (\Op^* \Op)^{-1} \VarX \|_\SpX < \infty
\end{equation*}
or that there exist constants $\gamma > 0,\,\, 1>\nu > 0$,\,\, $C_{\gamma,\nu} >0$ and
$D_{\gamma,\nu} >0$ such that for all $0 \leq t \leq \gamma$  (see
\cite{Bauer/Kindermann:2008} Definition 2.2)
\begin{equation*}
 D_{\gamma,\nu}^2 t^{2 \nu} \geq \sum_{\{ k:| \sigma_k^2 \leq t \}}  \<\VarX,u_k\>^2  \geq C_{\gamma,\nu}^2 t^{2 \nu}
\end{equation*}
\end{assumption}
\begin{remark}
There are other functional analytic formulations of this assumption (cf. \cite{Kindermann/Neubauer:2008}); however, the general idea is similar; namely that $\VarX$ should have a rather uniform distribution of the energy in its coefficients $\< \VarX, \BasX_k \>$.
\end{remark}
\begin{lemma}[cf. \cite{Bauer/Kindermann:2008}]
Let $\VarX$ fulfill assumption \ref{assumption:x}. Then it holds
\begin{equation}\label{impineq}
 \| \VarX - \VarX_{n+1} \|  \leq  w_1 \| \VarX - \VarX_{n} \|
\end{equation}
with $0<w_1 <1$.
\end{lemma}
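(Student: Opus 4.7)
The plan is to pass to the singular value decomposition of $\Op$ and reduce the assertion to the behaviour of a single scalar function of the Tikhonov parameter $\alpha := q_0 q^n$. Writing $x_k := \<\VarX,\BasX_k\>$, a straightforward computation gives
$$
G(\alpha) \;:=\; \|\VarX - \VarX_n\|^2 \;=\; \sum_{k=1}^\infty \frac{\alpha^2 x_k^2}{(\Fsing_k^2 + \alpha)^2},
$$
so the inequality \eqref{impineq} is equivalent to $G(q\alpha) \leq w_1^2 G(\alpha)$ for every $\alpha$ in the discrete set $\{q_0 q^n : n \geq 0\} \subseteq (0,q_0]$. It therefore suffices to show that the ratio $R(\alpha) := G(q\alpha)/G(\alpha)$ is uniformly bounded away from $1$ on the whole interval $(0,q_0]$.

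Each summand $\alpha^2/(\Fsing_k^2+\alpha)^2$ is strictly increasing in $\alpha$, and dominated convergence (pointwise convergence with summable envelope $x_k^2$) makes $G$ continuous on $(0,\infty)$; this already gives $R(\alpha)<1$ pointwise and continuity of $R$ on $(0,q_0]$. I would then analyse the limit $\alpha\downarrow 0$ under each branch of Assumption~\ref{assumption:x}. Under the first condition, write $G(\alpha) = \alpha^2 H(\alpha)$ with $H(\alpha) := \sum_k x_k^2/(\Fsing_k^2+\alpha)^2$; monotone convergence then yields $H(\alpha) \to \|(\Op^*\Op)^{-1}\VarX\|_\SpX^2 \in (0,\infty)$, hence $R(\alpha) \to q^2$. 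Under the second condition, introduce the distribution function $\phi(t) := \sum_{\Fsing_k^2 \leq t} x_k^2$, rewrite $G(\alpha) = 2\alpha^2 \int_0^\infty \phi(t)/(t+\alpha)^3\,dt$ by Abel summation, and perform the substitution $t=\alpha s$; combining the two-sided bound $C_{\gamma,\nu}^2 t^{2\nu} \leq \phi(t) \leq D_{\gamma,\nu}^2 t^{2\nu}$ on $(0,\gamma]$ with the finiteness of $\int_0^\infty s^{2\nu}/(1+s)^3\,ds$ (which needs $\nu<1$) gives $G(\alpha)$ of exact order $\alpha^{2\nu}$, so $R(\alpha) \to q^{2\nu}$.

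In both cases $R$ extends continuously to the compact interval $[0,q_0]$, takes the value $q^2$ or $q^{2\nu}$ at the origin, and is strictly less than $1$ everywhere on $[0,q_0]$, so $w_1^2 := \sup_{\alpha \in (0,q_0]} R(\alpha) < 1$ yields the claimed contraction. The main obstacle is the Case~2 asymptotic: one must use both the upper and the lower $t^{2\nu}$-bound simultaneously and truncate the integral at the smoothness threshold $\gamma$ so that the contribution of $\phi$ beyond that threshold is only a lower-order remainder; the rest of the argument is routine SVD bookkeeping together with a compactness argument on $[0,q_0]$.
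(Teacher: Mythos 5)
Your reduction to the scalar function $G(\alpha)=\sum_k \alpha^2 x_k^2/(\Fsing_k^2+\alpha)^2$ and your treatment of the first branch of Assumption \ref{assumption:x} are sound (note the paper itself gives no proof here but defers to \cite{Bauer/Kindermann:2008}). The genuine gap is in the second branch. From the two-sided bound $C_{\gamma,\nu}^2t^{2\nu}\le\phi(t)\le D_{\gamma,\nu}^2t^{2\nu}$ your Abel-summation computation yields only $c\,C_{\gamma,\nu}^2\alpha^{2\nu}\le G(\alpha)\le c'\,D_{\gamma,\nu}^2\alpha^{2\nu}+O(\alpha^2)$ with different constants $c<c'$ and possibly $C_{\gamma,\nu}\ll D_{\gamma,\nu}$. ``Exact order $\alpha^{2\nu}$'' therefore gives only $\limsup_{\alpha\downarrow0}R(\alpha)\le (c'D_{\gamma,\nu}^2/cC_{\gamma,\nu}^2)\,q^{2\nu}$, which need not be below $1$, and it certainly does not give $R(\alpha)\to q^{2\nu}$: the assumption admits distribution functions of the type $\phi(t)=t^{2\nu}\bigl(A+B\sin(\omega\log t)\bigr)$, for which $G(\alpha)/\alpha^{2\nu}$ oscillates in $\log\alpha$ and $R$ has no limit at $0$ at all. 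Hence your concluding step---extending $R$ continuously to $[0,q_0]$ and taking a supremum over a compact set---breaks down precisely where the strength of the assumption is needed; only Case 1, where $H(\alpha)$ genuinely converges, survives.

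The missing idea is a threshold splitting rather than an asymptotic equivalence. For a component with $\Fsing_k^2=s\ge\epsilon\alpha$ the per-component ratio satisfies $q^2(s+\alpha)^2/(s+q\alpha)^2\le\theta_\epsilon:=q^2(1+\epsilon)^2/(q+\epsilon)^2<1$, while the components with $\Fsing_k^2<\epsilon\alpha$ contribute at most $\phi(\epsilon\alpha)\le D_{\gamma,\nu}^2\epsilon^{2\nu}\alpha^{2\nu}$ to $G(q\alpha)$. Combining this with the lower bound $G(\alpha)\ge\phi(\alpha)/4\ge C_{\gamma,\nu}^2\alpha^{2\nu}/4$ gives $R(\alpha)\le 4(D_{\gamma,\nu}^2/C_{\gamma,\nu}^2)\,\epsilon^{2\nu}+\theta_\epsilon$, and choosing $\epsilon$ small enough (depending only on $q$, $\nu$, $C_{\gamma,\nu}$, $D_{\gamma,\nu}$) makes the right-hand side strictly less than $1$ uniformly for all small $\alpha$; the remaining compact range of $\alpha$ is then covered by your continuity-plus-pointwise-strict-inequality argument. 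This is where the upper and lower bounds of Assumption \ref{assumption:x} really enter simultaneously: not to pin down the order of $G$, but to show that only a controllably small fraction of the error at level $\alpha$ sits on frequencies that are still unresolved at level $q\alpha$.
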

Now we will introduce two different noise models. Classically, one considers deterministic noise
\begin{definition}[Deterministic Noise]
$\xi$ is called determistic noise of noise level $\delta$ if
\begin{equation*}
\|\xi\| \leq \delta
\end{equation*}
The noise behavior $\rho(\cdot)$ wrt the regularization parameter $n$ is defined as
\begin{equation*}
\rho(n) = \| \Op_n^{-1} \| \delta \leq \delta (q_0 q^n)^{-1/2}
\end{equation*}
\end{definition}
As all the results can be transferred easily to the case of colored noise by modifying the function $\rho$ we will now just consider white noise.
\begin{definition}[Stochastic Noise]
Let $\xi$ a white noise Gaussian random variable, i.e., the  $\<\VarX,\BasX_k\>$ are independent and identically distributed (iid) along the distribution  $\mathcal{N}(0,\delta^2)$.
The noise behavior $\rho(\cdot)$ wrt the regularization parameter $n$ is defined as
\begin{equation*}
\rho(n)^2 = \E \| \Op_n^{-1} \xi \|^2 = \delta^2 \operatorname{trace}(\Op_n^{-1})
\end{equation*}
\end{definition}
In both cases this trivially yields
\begin{lemma}
There exists  $1< w_2$ such that
\begin{equation}\label{impineq2}
\rho(n) \leq \rho(n+1) \leq w_2 \rho(n)
\end{equation}
\end{lemma}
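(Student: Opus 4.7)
The plan is to handle both noise regimes by a single SVD computation, since in both cases $\rho(n)$ is a functional of the filter values $\sigma_k^2/(\sigma_k^2+q_0 q^n)^2$. Indeed, for the deterministic case, $\|\Op_n^{-1}\|^2 = \sup_k \sigma_k^2/(\sigma_k^2 + q_0 q^n)^2$, while for the stochastic case, writing $\xi$ in the basis $(v_k)$ gives $\E\|\Op_n^{-1}\xi\|^2 = \delta^2 \sum_k \sigma_k^2/(\sigma_k^2+q_0 q^n)^2$. So it suffices to understand how the single-mode factor $\sigma_k^2/(\sigma_k^2+q_0 q^n)^2$ behaves as $n\to n+1$.

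For the left inequality $\rho(n)\leq\rho(n+1)$, I would note that $0<q<1$ implies $q_0 q^{n+1}\leq q_0 q^n$, so $\sigma_k^2/(\sigma_k^2+q_0 q^{n+1})^2 \geq \sigma_k^2/(\sigma_k^2+q_0 q^n)^2$ termwise. Taking the supremum (deterministic) or the sum (stochastic) of the nonnegative terms preserves the inequality; multiplying by $\delta$ resp.\ taking square roots then gives $\rho(n)\leq\rho(n+1)$.

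For the right inequality, the key is a pointwise upper bound on the ratio
\begin{equation*}
\frac{\sigma_k^2+q_0 q^n}{\sigma_k^2+q_0 q^{n+1}}
= \frac{\sigma_k^2+q_0 q^{n+1}/q}{\sigma_k^2+q_0 q^{n+1}},
\end{equation*}
which, as a function of $t=\sigma_k^2\geq 0$ of the form $(t+a/q)/(t+a)$ with $a=q_0 q^{n+1}>0$, is decreasing from the value $1/q$ at $t=0$ down to $1$ at $t=\infty$, hence is bounded by $1/q$ uniformly in $k$. Squaring gives $\sigma_k^2/(\sigma_k^2+q_0 q^{n+1})^2 \leq q^{-2}\,\sigma_k^2/(\sigma_k^2+q_0 q^n)^2$ for every $k$. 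Passing again to the supremum in the deterministic case or to the sum in the stochastic case, I obtain $\rho(n+1)^2\leq q^{-2}\,\rho(n)^2$, so the constant $w_2=1/q>1$ works in both regimes.

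There is no serious obstacle here: the estimate is a one-line comparison of rational functions, and the only thing to be careful about is to verify that the same $w_2$ serves both definitions of $\rho$, which is why I package the argument around the common filter factor rather than treating the two cases separately. The colored-noise extension promised in the preceding remark would be handled by the same mechanism, provided the weights in the sum defining $\rho$ retain the structure $\sigma_k^2/(\sigma_k^2+q_0 q^n)^2$ times bounded, $n$-independent factors.
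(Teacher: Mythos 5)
Your argument is correct: the termwise comparison of the filter factors $\sigma_k^2/(\sigma_k^2+q_0q^n)^2$ yields both the monotonicity $\rho(n)\leq\rho(n+1)$ and the bound $\rho(n+1)\leq q^{-1}\rho(n)$ simultaneously for the deterministic (supremum) and stochastic (sum) definitions, which is precisely the routine spectral computation the paper has in mind when it states the lemma without proof as a triviality. Your explicit constant $w_2=1/q$ is a nice by-product that the paper does not record.
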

\begin{remark}
Almost every other constant in this article will be based on $w_1$ and $w_2$. Though desirable we cannot give a general upper or lower bound for these constants, they are purely problem dependent. At first glance this might look as a disadvantage, however this means as well that we get problem specific optimal results.
\end{remark}

\section{Fast Balancing}\label{fastbalancing}
The question of the optimality of a regularization parameter $n_{opt}$ is rather difficult in a deterministic setting. The most natural definition, namely
$$n_{o} = \argmin_n \| \VarX_n^\delta - \VarX \|, $$ would be best. However, there is no known concept which leads to proofs in a general setting. Thus we will use the second-best solution. Using the triangle inequality, the error $\| \VarX_n^\delta - \VarX \|$ is bounded by the sum of a decreasing function $\| \VarX_n - \VarX \|$ and an increasing function $\| \Op_n^{-1} \xi\|$ which itself is bounded by $\rho(n)$.
$$ \| \VarX_n^\delta - \VarX \| \leq \| \VarX_n - \VarX \| + \| \Op_n^{-1} \xi\| \leq \| \VarX_n - \VarX \| + \rho(n)$$
Again, the point $$n_{oo} = \argmin_n \| \VarX_n - \VarX \| + \rho(n)$$ is inaccessible. However, using the definition \eref{impineq3} below we can at least guarantee that
\begin{equation*}
2 \(\|\VarX_{n_{oo}} - \VarX\| + \rho(n_{oo})\) \geq  \|\VarX_{n_{opt}} - \VarX\| + \rho(n_{opt}).
\end{equation*}
Interestingly, the stochastic case is much easier due to the independence of $\xi$ and $\VarX$:
\begin{equation*}
\E \| \VarX_n^\delta - \VarX \|^2 = \| \VarX_n - \VarX \|^2 + \rho(n)^2 + 2 \< \(\Op_n^{-1}\)^* (\VarX_n - \VarX) , \xi \> = \| \VarX_n - \VarX \|^2 + \rho(n)^2.
\end{equation*}
In this case, the  parameter $n_{opt}$ as defined below is really optimal on average:
\begin{definition}[Optimal Parameter]
The optimal parameter $n_{opt}$ is defined such that
\begin{equation}\label{impineq3}
\| \VarX_{n_{opt}} - \VarX \|  >  \rho(n_{opt})  \qquad \text{and} \qquad   \| \VarX_{n_{opt}+1} - \VarX \|  \leq  \rho(n_{opt}+1)
\end{equation}
\end{definition}
\begin{remark}
This parameter, of course, just exists when the noise level $\delta$ is sufficiently small. However, if this is not the case the noise described by $\rho(\cdot)$ dominates the information $\VarX$ so much that $0$ would be the best regularized solution. It is common practice to assume that this parameter $n_{opt}$ exists.
\end{remark}
Now we define the Fast and the Lepskij balancing principle and perform a first comparison

\begin{definition}[Balancing Functional]
Let $k \geq 1$. The balancing functional is defined as
\begin{equation}\label{def:balfunc}
b_k(n) = \max_{n< m \leq n + k} \left\{ 4^{-1} \|\VarX_n^\delta - \VarX_m^\delta\|\rho(m)^{-1}  \right\}
\end{equation}
\end{definition}

\begin{definition}[Lepskij Balancing Principle]
Let $b_k(n)$ defined as in \eref{def:balfunc}. Furthermore let $\tau\geq 1$ and $N\geq n_*$. The Lepskij Balancing Parameter $n_L(\tau,N) = n_L$ is defined as
\begin{equation}\label{def:lepskibal}
n_L(\tau,N) = \argmin_{n} \left\{ b_k(m) < \tau \quad \forall N \geq m \geq n\right\}
\end{equation}
\end{definition}

\begin{definition}[Fast Balancing Principle]
Let $b_k(n)$ defined as in \eref{def:balfunc} and let $\tau\geq 0$. The fast balancing parameter\\ $n_*=n_*(\tau)$ is defined as
\begin{equation}\label{def:fastbal}
n_*(\tau) = \argmin_{n} \left\{ b_k(n) < \tau  \right\}
\end{equation}
\end{definition}
\begin{remark}
In contrast to the Lepskij balancing principle \cite{Mathe/Pereverzev:2003,Mathe/Pereverzev:2006,Bauer/Munk:2007}, no upper bound $N$ is needed.
\end{remark}
\begin{lemma}
It holds for all admissible pairs $(N,\tau)$
\begin{equation*}
n_L(\tau,N) \geq \min \{N,n_*(\tau)\}
\end{equation*}
\end{lemma}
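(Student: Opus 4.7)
The plan is a short case analysis driven by one single observation: whenever $n_L(\tau,N)$ lies inside the admissible range $\{n : n \leq N\}$, the very definition of $n_L$ (specialised to the case $m=n$ in the universal quantifier) forces $b_k(n_L) < \tau$, which is exactly the defining predicate whose smallest solution is $n_*(\tau)$. So the bulk of the argument is just unpacking the two definitions and playing their minimality properties against each other.

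Concretely, I would split on whether $n_L(\tau,N) \leq N$ or not. In the first case, taking $m = n_L(\tau,N)$ is legitimate in the conjunction $b_k(m)<\tau \ \forall N\geq m\geq n_L(\tau,N)$, so $b_k(n_L(\tau,N)) < \tau$. Since $n_*(\tau)$ is by definition the smallest index $n$ with $b_k(n) < \tau$, this immediately yields $n_*(\tau) \leq n_L(\tau,N)$, and therefore
\begin{equation*}
n_L(\tau,N) \;\geq\; n_*(\tau) \;\geq\; \min\{N,n_*(\tau)\}.
\end{equation*}
In the complementary case $n_L(\tau,N) > N$, the inequality is immediate from $n_L(\tau,N) > N \geq \min\{N, n_*(\tau)\}$.

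I do not expect any real obstacle: no analytic estimate, no use of Assumption~\ref{assumption:x}, and no use of the noise model is needed. The only delicate point worth flagging is bookkeeping around the degenerate situation in which the predicate $b_k(n)<\tau$ never holds, so that $n_*(\tau)$ is not attained; in that case the statement is either vacuous or should be read with the convention $n_*(\tau)=+\infty$, under which the right-hand side collapses to $N$ and the inequality remains correct because then $n_L(\tau,N)>N$ as well. Apart from this cosmetic point, the proof is essentially a one-liner that shows the fast balancing parameter is never worse than (in fact, not larger than) what Lepskij balancing produces in its admissible window, which is the whole motivation for introducing the principle.
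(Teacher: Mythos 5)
Your proof is correct and is exactly the argument the paper has in mind: the paper disposes of this lemma by calling it a direct consequence of definitions \eref{def:lepskibal} and \eref{def:fastbal}, and your case split (instantiating $m=n_L$ when $n_L\leq N$, trivial otherwise) is just that consequence spelled out, together with a reasonable convention for the degenerate case.
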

\begin{proof}
This is a direct consequence out of \eref{def:lepskibal} and \eref{def:fastbal}.
\end{proof}
\begin{remark}
For the sake of simpler notation we will mostly refer to $n_L(\tau,N)$ by $n_L$ and to $n_*(\tau)$ by $n_*$. Just when the particular choice of $N$ and $\tau$ is importance we will keep these parameters.

The condition that $N\geq n_*$ is of course rigorously seen not fulfillable without knowing the optimal regularization parameter, however this is a standard assumption for all proofs for the Lepskij balancing principle and does not pose any particular problems in practice where $N$ is normally chosen anyway in the range of the machine precision.
\end{remark}

\section{Deterministic Case}
\label{deterministic}
Now we will show that -- as for the Lepskij balancing principle -- for the fast balancing principle an oracle inequality holds.
\begin{theorem}
Let $n_*$ and $n_{opt}$ defined as above. Furthermore assume assumption \ref{assumption:x} to be valid. It holds for the constants $w_1$, $w_2$ defined in \eref{impineq} and \eref{impineq2} independent of $\delta$
\begin{equation*}
\|\VarX_{n_*}^\delta - \VarX\| \leq C \(\|\VarX_{n_{opt}} - \VarX\| + \rho(n_{opt}) \)
\end{equation*}
where $C = \frac{(4\tau +3)}{2}\min_{1\leq \kappa \leq k} \left\{\frac{w_2^\kappa}{1 -
w_1^{\kappa}}\right\}$.
\end{theorem}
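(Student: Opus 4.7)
The plan is to decouple the argument into two stages. In the first stage I would use the defining condition $b_k(n_*)<\tau$ together with the structural estimates \eref{impineq} and \eref{impineq2} to bound $\|\VarX_{n_*}^\delta-\VarX\|$ by a multiple of $\rho(n_*)$ alone. In the second stage I would show that $\rho(n_*)$ is itself dominated by $\|\VarX_{n_{opt}}-\VarX\|+\rho(n_{opt})$, through a short case analysis on whether $n_*\leq n_{opt}$ or $n_*>n_{opt}$. Taking the minimum over the free parameter $\kappa$ will then produce the stated constant.

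For the first stage, fix any $\kappa\in\{1,\ldots,k\}$. From \eref{def:balfunc} and \eref{def:fastbal} one gets $\|\VarX_{n_*}^\delta-\VarX_{n_*+\kappa}^\delta\|\leq 4\tau\rho(n_*+\kappa)$. Peeling off the two noise contributions $\|\VarX_n^\delta-\VarX_n\|=\|\Op_n^{-1}\Err\|\leq\rho(n)$ and invoking \eref{impineq2} turns this into $\|\VarX_{n_*}-\VarX_{n_*+\kappa}\|\leq(4\tau+2)w_2^\kappa\rho(n_*)$. Iterating \eref{impineq} $\kappa$ times yields $\|\VarX-\VarX_{n_*+\kappa}\|\leq w_1^\kappa\|\VarX-\VarX_{n_*}\|$, and a reverse triangle inequality gives $(1-w_1^\kappa)\|\VarX-\VarX_{n_*}\|\leq\|\VarX_{n_*}-\VarX_{n_*+\kappa}\|$. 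Combining and adding one final $\rho(n_*)$ for the noise at $n_*$ produces the key estimate
\begin{equation*}
\|\VarX_{n_*}^\delta-\VarX\|\;\leq\;\frac{(4\tau+3)\,w_2^\kappa}{1-w_1^\kappa}\,\rho(n_*),
\end{equation*}
which holds for every admissible $\kappa$.

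For the second stage, if $n_*\leq n_{opt}$ then $\rho(n_*)\leq\rho(n_{opt})$ by \eref{impineq2}, and \eref{impineq3} gives $\rho(n_{opt})\leq\|\VarX_{n_{opt}}-\VarX\|$; hence $\rho(n_*)\leq\tfrac{1}{2}(\|\VarX_{n_{opt}}-\VarX\|+\rho(n_{opt}))$, which plugged into the key estimate furnishes exactly $C=\tfrac{4\tau+3}{2}\tfrac{w_2^\kappa}{1-w_1^\kappa}$ after minimising over $\kappa$. If instead $n_*>n_{opt}$, the minimality of $n_*$ in \eref{def:fastbal} forces $b_k(n_*-1)\geq\tau$: there exists $m\in(n_*-1,\,n_*-1+k]$ with $\|\VarX_{n_*-1}^\delta-\VarX_m^\delta\|\geq 4\tau\rho(m)$. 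Bounding the left side above by bias plus noise and using that both $n_*-1$ and $m$ lie at or beyond $n_{opt}$ (so their biases are controlled by $\|\VarX_{n_{opt}}-\VarX\|$ via \eref{impineq}, and in fact by $\rho(n_{opt}+1)\leq w_2\rho(n_{opt})$ once the indices exceed $n_{opt}+1$, via \eref{impineq3}), one rearranges to conclude that $\rho(n_*)$ is again dominated by $\|\VarX_{n_{opt}}-\VarX\|+\rho(n_{opt})$, with a constant no larger than in the previous case.

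The main obstacle I anticipate is precisely the case $n_*>n_{opt}$. Unlike the Lepskij principle, fast balancing offers no chain $b_k(m)<\tau$ reaching from $n_*$ up to large $m$, so the only available leverage is the lower bound $b_k(n_*-1)\geq\tau$ coming from minimality. The delicate bookkeeping is to verify, using \eref{impineq}, \eref{impineq2}, and \eref{impineq3}, that $n_*$ cannot overshoot $n_{opt}$ by more than a bounded number of steps, so that the constant emerging from this sub-case is absorbed into the Case A constant $\tfrac{(4\tau+3)}{2}\tfrac{w_2^\kappa}{1-w_1^\kappa}$ once the minimum over $\kappa$ is taken.
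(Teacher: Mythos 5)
Your Stage 1 and your Case A are correct and coincide with the paper's central computation: from $b_k(n_*)<\tau$ you extract $(1-w_1^\kappa)\|\VarX_{n_*}-\VarX\|\leq (4\tau+2)w_2^\kappa\rho(n_*)$, add one $\rho(n_*)$ for the data error, and in the case $n_*\leq n_{opt}$ convert $\rho(n_*)\leq\rho(n_{opt})<\|\VarX_{n_{opt}}-\VarX\|$ into the stated constant. Up to that point you are following the same route as the paper.

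The genuine gap is your Case B ($n_*>n_{opt}$). The paper eliminates this case at the very start: for any $m>n>n_{opt}$, the defining property \eref{impineq3} of $n_{opt}$ (bias below $\rho$ beyond $n_{opt}$) together with the monotonicity of $\rho$ gives $\|\VarX_n^\delta-\VarX_m^\delta\|\leq\|\VarX_n-\VarX\|+\|\VarX_m-\VarX\|+\rho(n)+\rho(m)<4\rho(m)$, hence $b_k(n)<1\leq\tau$ for every $n>n_{opt}$, so $n_*\leq n_{opt}$ and $\rho(n_*)\leq\rho(n_{opt})$; your Case B is vacuous (note this step is exactly where $\tau\geq1$ is needed, which you never invoke). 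You miss this observation and instead propose to control the overshoot through the minimality condition $b_k(n_*-1)\geq\tau$ plus ``delicate bookkeeping'' that you yourself flag as the main obstacle and do not carry out -- as written it is a plan, not a proof. Moreover, even if that plan were completed and showed an overshoot of at most $j\geq1$ steps, it would only yield $\rho(n_*)\leq w_2^{\,j}\rho(n_{opt})$, i.e.\ a constant strictly larger than the claimed $C=\frac{4\tau+3}{2}\min_{1\leq\kappa\leq k}\{w_2^\kappa/(1-w_1^\kappa)\}$, and on the right-hand side you could no longer use $\rho(n_{opt})<\|\VarX_{n_{opt}}-\VarX\|$ to halve the bound in the same way; so the assertion that the Case B constant ``is absorbed into the Case A constant'' is unsupported. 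The missing ingredient is precisely the paper's first step: use \eref{impineq3} to show the balancing functional is automatically below $1$ beyond $n_{opt}$, which pins $n_*$ at or below $n_{opt}$ and reduces everything to your Case A.
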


\begin{proof}
Assume $m > n > n_{opt}$. Due to assumption \ref{assumption:x} and either definition of the noise the equations \eref{impineq} and \eref{impineq2} hold and we obtain with the triangle inequality
\begin{align*}
\|\VarX_{n}^\delta - \VarX_{m}^\delta\| &\leq \|\VarX_{n} - \VarX\| +\|\VarX_{m} - \VarX\| + \rho(n) + \rho(m) <4 \rho(m)
\end{align*}
and hence
\begin{equation*}
b_k(n) < 1
\end{equation*}
and thus $n_* \leq n_{opt}$ and hence $\rho(n_*)\leq \rho(n_{opt})$ due to \eref{impineq2}. Using \eref{impineq3} we have
\begin{equation*}
\|\VarX_{n_*} - \VarX\| \geq \rho(n_*).
\end{equation*}
On the other hand, for $n \leq n_*$ and $\kappa \leq k$, using the inverse triangle
inequality and \eref{impineq} resp. \eref{impineq2}:
\begin{align*}
4 \tau \rho(n+\kappa) \geq & \|\VarX_n^\delta - \VarX_{n+\kappa}^\delta\| \\
\geq & \|\VarX_n - \VarX\| - \|\VarX_{n+\kappa} - \VarX\| - \rho(n) - \rho(n+\kappa) \\
\geq & (1 - w_1^{\kappa}) \|\VarX_n - \VarX\| - 2 \rho(n+\kappa).
\end{align*}
Hence
\begin{equation*}
\|\VarX_{n_*} - \VarX\| \leq (4\tau +2)\min_{1\leq \kappa \leq k} \left\{\frac{w_2^\kappa}{1 -
w_1^{\kappa}}\right\} \rho(n_*)
\end{equation*}
and so using \eref{impineq2}
\begin{align*}
\|\VarX_{n_*}^\delta - \VarX\| &\leq \|\VarX_{n_*} - \VarX\| + \rho(n_*) \\
 &\leq (4\tau +3)\min_{1\leq \kappa \leq k} \left\{\frac{w_2^\kappa}{1 - w_1^{\kappa}}\right\} \rho(n_*) \\
 &\leq (4\tau +3)\min_{1\leq \kappa \leq k} \left\{\frac{w_2^\kappa}{1 - w_1^{\kappa}}\right\} \rho(n_{opt}) \\
 &\leq \frac{(4\tau +3)}{2}\min_{1\leq \kappa \leq k} \left\{\frac{w_2^\kappa}{1 - w_1^{\kappa}}\right\} \(\|\VarX_{n_{opt}} - \VarX\| + \rho(n_{opt}) \).
\end{align*}
\end{proof}
\begin{remark}
Obviously, we obtain the best result for the minimal admissible $\tau$, i.e. $\tau = 1$. Furthermore, $C$ is independent of $\delta$ as $w_1$ and $w_2$ are.
\end{remark}

\section{Stochastic Case}
\label{stochastic}

We will need a stochastic bound for the probability that the observed error is much bigger than our estimation $\rho(\cdot)$.

\begin{lemma}[see e.g., \cite{Bauer/Reiss:2008}]\label{ProbLemma}
Let $Z=\sum_{k=1}^\infty\alpha_k^2\zeta_k^2$ with $\sum_{k=1}^\infty\alpha_k^2=1$ and
$\zeta_k\sim N(0,1)$ iid. Assume that $\max_k \alpha_k >0$. Then
\begin{equation}\label{sto}
\forall\,z>0:\;\mathbb{P}(Z\ge z)\le \sqrt{2}e^{-z/4}.
\end{equation}
\end{lemma}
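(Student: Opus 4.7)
The plan is to prove this concentration bound by the standard Chernoff/Cramer approach: for $t>0$ small enough for the moment generating function to exist, use Markov's inequality on $e^{tZ}$, and then optimize (or simply pick) $t$. Concretely, the first step is to write
\begin{equation*}
\PP(Z \geq z) \leq e^{-tz}\, \E[e^{tZ}],
\end{equation*}
valid for any $0 < t < 1/(2\max_k \alpha_k^2)$, and to exploit independence of the $\zeta_k$ to factorize
\begin{equation*}
\E[e^{tZ}] = \prod_{k=1}^\infty \E\bigl[e^{t\alpha_k^2 \zeta_k^2}\bigr] = \prod_{k=1}^\infty (1 - 2t\alpha_k^2)^{-1/2},
\end{equation*}
using the well-known MGF of a squared standard normal.

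The second step is to pin down $t$. Since the target constant is $\sqrt 2\, e^{-z/4}$, the natural choice is $t = 1/4$, which is admissible because $\alpha_k^2 \leq \sum_j \alpha_j^2 = 1 < 2$. With this choice the task reduces to showing $\E[e^{Z/4}] \leq \sqrt{2}$, i.e.
\begin{equation*}
-\tfrac{1}{2}\sum_{k=1}^\infty \log\bigl(1 - \alpha_k^2/2\bigr) \leq \tfrac{1}{2}\log 2.
\end{equation*}
I would prove this by expanding the logarithm as a power series and interchanging sums (all terms positive, so Tonelli applies), obtaining
\begin{equation*}
-\tfrac{1}{2}\sum_{k=1}^\infty \log(1 - \alpha_k^2/2) = \tfrac{1}{2}\sum_{j=1}^\infty \frac{1}{j\, 2^j}\sum_{k=1}^\infty \alpha_k^{2j}.
\end{equation*}
Because $0\leq \alpha_k^2 \leq 1$, one has $\alpha_k^{2j}\leq \alpha_k^2$ for all $j\geq 1$, so $\sum_k \alpha_k^{2j} \leq \sum_k \alpha_k^2 = 1$. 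Hence the right-hand side is bounded by $\tfrac{1}{2}\sum_{j\geq1} 1/(j\,2^j) = \tfrac{1}{2}\log 2$, which is exactly what is needed.

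Combining the two steps gives $\PP(Z\geq z)\leq e^{-z/4}\sqrt 2$, proving \eref{sto}. The main potential obstacle is purely bookkeeping: making sure the MGF factorization and the series manipulations are justified for an infinite sum (termwise positivity handles this), and verifying that the choice $t=1/4$ gives precisely the advertised constant rather than something slightly worse; the inequality $\alpha_k^{2j}\leq \alpha_k^2$ combined with $\sum_k \alpha_k^2=1$ is the one place where the normalization hypothesis enters crucially, and it is what makes the clean value $\sqrt{2}$ appear.
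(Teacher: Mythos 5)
Your argument is correct and complete: the Chernoff bound with $t=1/4$, the factorization of the moment generating function (justified by monotone convergence for the infinite sum), and the estimate $\sum_k \alpha_k^{2j}\leq\sum_k\alpha_k^2=1$ yield exactly $\E[e^{Z/4}]\leq\sqrt{2}$ and hence \eref{sto}. The paper itself offers no proof of this lemma --- it is quoted from the cited reference \cite{Bauer/Reiss:2008} --- and your exponential-moment argument is the standard one used there, so there is nothing to reconcile.
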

The proofs would become far too complicated if we used $b_k(\cdot)$ for any $k$. Therefore, we will restrict our attention to the case $k=1$, i.e., $b_1(\cdot)$. In numerical implementations we observed that the results improve for slightly bigger $k$. From a certain $k$ onwards (problem-dependent) it turns out that $k$ does not seem to have an influence on the solution any more.
\begin{lemma}\label{problemma2}
Assume assumption \ref{assumption:x} to be valid.
For $n<n_{opt}$ there exist constants $c_1, c_2 > 0$ depending on $w_1$ and $w_2$ but not depending on $\delta$ and $\tau$ such that
\begin{equation}\label{probineq}
\PP \left\{b_1(n) < \tau \right\} \leq c_1 \exp\(-c_2(n_{opt}-n)^2\) \exp(\tau^2)
\end{equation}
Furthermore for $n > n_{opt}$ there exists a constant  $c_3 >0$ depending on $w_1$ and $w_2$ but not depending on $\delta$ and not depending on $\tau$ such that
\begin{equation}\label{probineq2}
\PP \left\{b_1(n)
\geq \tau \right\} \leq c_3 \exp\(-\tau^2/4\).
\end{equation}
\end{lemma}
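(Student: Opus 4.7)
The plan is to decompose the observable
\[
\VarX_n^\delta-\VarX_{n+1}^\delta = (\VarX_n-\VarX_{n+1}) + (\Op_n^{-1}-\Op_{n+1}^{-1})\xi
\]
into its deterministic bias and its Gaussian noise parts, control the bias with the contractivity estimates \eref{impineq} and \eref{impineq2} already exploited in Section~\ref{deterministic}, and control the noise with Lemma~\ref{ProbLemma}. Expanded in the singular basis, $\|(\Op_n^{-1}-\Op_{n+1}^{-1})\xi\|^2=\sum_k a_k^2\xi_k^2$ with $\xi_k$ iid $\NormDist(0,\delta^2)$; since $\|(\Op_n^{-1}-\Op_{n+1}^{-1})\xi\|^2\leq 2\|\Op_n^{-1}\xi\|^2+2\|\Op_{n+1}^{-1}\xi\|^2$, the variance satisfies $\delta^2\sum_k a_k^2\leq 2\rho(n)^2+2\rho(n+1)^2\leq 4\rho(n+1)^2$ by \eref{impineq2}. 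Normalising $\alpha_k:=a_k/(\sum_j a_j^2)^{1/2}$ and invoking Lemma~\ref{ProbLemma} then yields the master tail bound
\[
\PP\bigl(\|(\Op_n^{-1}-\Op_{n+1}^{-1})\xi\|\geq y\bigr)\leq \sqrt 2\exp\!\left(-\frac{y^2}{16\,\rho(n+1)^2}\right),\qquad y\geq 0.
\]

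For \eref{probineq2}, if $n>n_{opt}$ then \eref{impineq3} together with \eref{impineq} implies that both $\|\VarX_n-\VarX\|$ and $\|\VarX_{n+1}-\VarX\|$ are bounded by $\rho(n+1)$, so $\|\VarX_n-\VarX_{n+1}\|\leq 2\rho(n+1)$. Hence $b_1(n)\geq \tau$ forces $\|(\Op_n^{-1}-\Op_{n+1}^{-1})\xi\|\geq (4\tau-2)\rho(n+1)\geq 2\tau\rho(n+1)$ for $\tau\geq 1$, and the master bound returns exactly $\sqrt 2\,e^{-\tau^2/4}$; the regime $\tau<1$ is absorbed into the constant $c_3$.

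For \eref{probineq}, write $\Delta:=n_{opt}-n\geq 1$. Iterating \eref{impineq} together with \eref{impineq3} and monotonicity of $\rho$ gives $\|\VarX_n-\VarX\|\geq w_1^{-\Delta}\|\VarX_{n_{opt}}-\VarX\|\geq w_1^{-\Delta}\rho(n+1)$, and a further application of \eref{impineq} yields $\|\VarX_n-\VarX_{n+1}\|\geq (1-w_1)w_1^{-\Delta}\rho(n+1)$. The reverse triangle inequality then produces
\[
\{b_1(n)<\tau\}\subseteq\bigl\{\|(\Op_n^{-1}-\Op_{n+1}^{-1})\xi\|>((1-w_1)w_1^{-\Delta}-4\tau)\rho(n+1)\bigr\}.
\]
I split on whether $8\tau\leq (1-w_1)w_1^{-\Delta}$ or not. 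In the first sub-case, $(a-b)^2\geq a^2/4$ with $a=(1-w_1)w_1^{-\Delta}$, $b=4\tau$, and the master bound gives $\PP\{b_1(n)<\tau\}\leq \sqrt 2\exp(-(1-w_1)^2 w_1^{-2\Delta}/64)$; in the second sub-case the probability is bounded trivially by $1$, but $\tau^2>(1-w_1)^2 w_1^{-2\Delta}/64$ ensures the same quantity multiplied by $\exp(\tau^2)$ still exceeds $1$. In both sub-cases
\[
\PP\{b_1(n)<\tau\}\leq \sqrt 2\exp\!\left(-\frac{(1-w_1)^2 w_1^{-2\Delta}}{64}\right)\exp(\tau^2),
\]
and since $w_1^{-2\Delta}/\Delta^2\to\infty$ one can pick $c_2>0$ depending only on $w_1$ with $(1-w_1)^2 w_1^{-2\Delta}/64\geq c_2\Delta^2$ for every $\Delta\geq 1$, which is \eref{probineq} with $c_1=\sqrt 2$.

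The main obstacle I anticipate is exactly this case split: the master tail bound degenerates when $\tau$ is comparable to the deterministic gap $(1-w_1)w_1^{-\Delta}$, so one cannot simply insert Lemma~\ref{ProbLemma} uniformly; one must instead trade off against the $\exp(\tau^2)$ slack promised by the statement to cover that regime. Once the split is set up carefully, the remaining work is routine, resting on the super-polynomial growth of $w_1^{-2\Delta}$ in $\Delta$ and on the deterministic contractivity estimates from Section~\ref{deterministic}.
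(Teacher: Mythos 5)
Your proof is correct, and it follows the same skeleton as the paper's: bound the bias gap from below (for $n<n_{opt}$) or from above (for $n>n_{opt}$) via \eref{impineq}, \eref{impineq2}, \eref{impineq3} and the (reverse) triangle inequality, then control the stochastic part with the chi-square tail bound of Lemma \ref{ProbLemma}. Two genuine, if modest, differences are worth noting. First, you treat the noise term as the single Gaussian quadratic form $\|(\Op_n^{-1}-\Op_{n+1}^{-1})\xi\|^2$ with variance bounded by $4\rho(n+1)^2$ and invoke Lemma \ref{ProbLemma} once, whereas the paper splits the noise into $\|\Op_n^{-1}\xi\|/\rho(n)$ and $\|\Op_{n+1}^{-1}\xi\|/\rho(n+1)$ and uses a union bound with two applications of the lemma; your route avoids the union bound and gives marginally cleaner constants, at the cost of the (harmless) observation that both operators diagonalize in the same singular basis. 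Second, your explicit case split on whether $4\tau$ is small or large compared with the deterministic gap $(1-w_1)w_1^{-(n_{opt}-n)}$ is more careful than the paper's treatment: the paper applies the tail bound and then factors out $\exp(\tau^2)$ in one chain of displayed inequalities, a step that is only literally valid when $2\tau$ dominates the gap (and, in the opposite regime, relies on the threshold being positive), so its stated intermediate constant is too aggressive and is silently repaired by the final ``appropriate constants'' $c_1,c_2$; your trade-off against the $\exp(\tau^2)$ slack makes this rigorous, with $c_2$ obtained from the superexponential growth of $w_1^{-2(n_{opt}-n)}$ exactly as the paper intends. Likewise, in \eref{probineq2} the paper implicitly uses $\tau\geq 1$ in passing from $4\tau-2$ to $2\tau$, which you handle explicitly by absorbing $\tau<1$ into $c_3$. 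Your constants depend only on $w_1$, which is consistent with (indeed slightly stronger than) the statement.
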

\begin{proof}
Let $n<n_{opt}$. Then using \eref{eq1}, \eref{eq2}, \eref{eq3}, \eref{impineq}, \eref{impineq2}, \eref{impineq3},\eref{sto} and the triangle inequality we get  (usage of the equations marked on top)
\begin{align*}
\PP &\left\{4^{-1} \|\VarX_n^\delta - \VarX_{n+1}^\delta\|\rho(n+1)^{-1} < \tau \right\}
\\& = \PP \left\{4 \tau \rho(n+1) > \|\VarX_n^\delta - \VarX_{n+1}^\delta\| \right\}
\\& \overset{ti}{\leq} \PP \left\{4 \tau\rho(n+1) > \|\VarX_n - \VarX\| - \|\VarX_{n+1} - \VarX\| - \|\VarX_n^\delta - \VarX_n\| - \|\VarX_{n+1}^\delta - \VarX_{n+1}\| \right\}
\\& \overset{\eref{impineq}}\leq \PP \left\{4\tau  > \frac{(1-w_1) \|\VarX_n - \VarX\|}{\rho(n+1)}  - \frac{\|\VarX_n^\delta - \VarX_n\|}{\rho(n+1)} - \frac{\|\VarX_{n+1}^\delta - \VarX_{n+1}\|}{\rho(n+1)} \right\}
\\& = \PP \left\{ \frac{\|\VarX_n^\delta - \VarX_n\|}{\rho(n+1)} + \frac{\|\VarX_{n+1}^\delta - \VarX_{n+1}\|}{\rho(n+1)} > \frac{(1-w_1) \|\VarX_n - \VarX\|}{\rho(n+1)}  -4\tau \right\}
\\& \overset{\eref{impineq}}{\leq} \PP \left\{\frac{\|\VarX_n^\delta - \VarX_n\|}{\rho(n+1)} + \frac{\|\VarX_{n+1}^\delta - \VarX_{n+1}\|}{\rho(n+1)} > \frac{(1-w_1) w_1^{-(n_{opt}-n)} \|\VarX_{n_{opt}} - \VarX\|}{\rho(n_{opt})} - 4\tau  \right\}
\\& \overset{\eref{eq1}\eref{eq2}\eref{eq3}\eref{impineq2}\eref{impineq3}}{\leq} \PP \left\{\frac{\|A_n^{-1} \xi\|}{\rho(n)} + \frac{\|A_{n+1}^{-1} \xi\|}{\rho(n+1)} > {(1-w_1) w_1^{-(n_{opt}-n)}} - 4\tau  \right\}
\\& \leq \PP \left\{\frac{\|A_n^{-1} \xi\|}{\rho(n)}  > {\frac{1-w_1}{2} w_1^{-(n_{opt}-n)}} - 2\tau
\right\} + \PP \left\{ \frac{\|A_{n+1}^{-1} \xi\|}{\rho(n+1)} > {\frac{1-w_1}{2}
w_1^{-(n_{opt}-n)}} -2\tau \right\} \\
& \overset{\eref{sto}}{\leq} 2\sqrt{2} \exp\(-\(\frac{1-w_1}{2} w_1^{-(n_{opt}-n)} -2\tau\)^2/4\) \\
& \leq 2\sqrt{2} \exp\(-\(\frac{1-w_1}{4} w_1^{-(n_{opt}-n)}\)^2\) \exp(\tau^2) \\
& \leq c_1 \exp\(-c_2(n_{opt}-n)^2\) \exp(\tau^2)
\end{align*}
for some appropriate constants $c_1$ and $c_2$. Now let $n > n_{opt}$. Then using \eref{eq1}, \eref{eq2}, \eref{eq3},  \eref{impineq2}, \eref{impineq3}, \eref{sto} and the triangle inequality we get  
\begin{align*}
 \PP &\left\{4^{-1} \|\VarX_n^\delta - \VarX_{n+1}^\delta\|\rho(n+1)^{-1}
\geq \tau \right\} \\
& = \PP \left\{4\tau \rho(n+1) \leq \|\VarX_n^\delta - \VarX_{n+1}^\delta\| \right\}\\
& \overset{ti}{\leq} \PP \left\{4\tau \rho(n+1) \leq \|\VarX_n - \VarX\| + \|\VarX_{n+1} + \VarX\| +
\|\VarX_n^\delta - \VarX_n\| + \|\VarX_{n+1}^\delta - \VarX_{n+1}\| \right\}\\
& \overset{\eref{impineq2}\eref{impineq3}}{\leq} \PP \left\{(4\tau -2 ) \rho(n+1) \leq \|\VarX_n^\delta - \VarX_n\| +
\|\VarX_{n+1}^\delta - \VarX_{n+1}\| \right\}\\
& \overset{\eref{eq1}\eref{eq2}\eref{eq3}\eref{impineq2}}{\leq} \PP \left\{2\tau \leq \frac{\|A_n^{-1} \xi\|}{\rho(n)} + \frac{\|A_{n+1}^{-1} \xi\|}{\rho(n+1)} \right\}\\
& \leq \PP \left\{\tau \leq \frac{\|A_n^{-1} \xi\|}{\rho(n)}\right\} + \PP \left\{\tau\leq \frac{\|A_{n+1}^{-1} \xi\|}{\rho(n+1)} \right\}\\
& \overset{\eref{sto}}{\leq} 2\sqrt{2} \exp\(-\tau^2/4\) .
\end{align*}
\end{proof}

\begin{lemma} Assume assumption \ref{assumption:x} to be valid and constants $c_1$,$c_2$ as defined in lemma \ref{problemma2}.
Let $k=1$. Then for $n < n_{opt}$
\begin{equation}
\PP \left\{ n_* = n \right\} \leq c_1 \exp\(-c_2(n_{opt}-n)^2\) \exp(\tau^2).
\end{equation}
\end{lemma}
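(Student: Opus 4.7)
The plan is essentially a one-line reduction to the previous lemma, combined with a monotonicity observation coming directly from the definition of $n_*$. First I would observe that by the definition
\[
n_*(\tau) = \argmin_n\{b_1(n) < \tau\},
\]
the event $\{n_* = n\}$ is a subset of the event $\{b_1(n) < \tau\}$: in order for $n$ itself to be the first index at which the balancing threshold is crossed, $n$ must at the very least satisfy the threshold condition $b_1(n) < \tau$. (The stronger containment also requires $b_1(m) \geq \tau$ for all $m < n$, but this refinement is not needed to obtain the stated bound and would only sharpen the constants.)

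Second, I would apply monotonicity of probability to conclude
\[
\PP\{n_* = n\} \leq \PP\{b_1(n) < \tau\},
\]
and then plug in the bound \eref{probineq} from Lemma \ref{problemma2}, which gives exactly
\[
\PP\{b_1(n) < \tau\} \leq c_1 \exp\bigl(-c_2(n_{opt}-n)^2\bigr)\exp(\tau^2)
\]
for $n < n_{opt}$, with the same constants $c_1,c_2$ specified in that lemma. Concatenating these two inequalities yields the claim.

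There is no real obstacle here; the lemma is an immediate corollary of Lemma \ref{problemma2} once one notes how the $\argmin$ in the definition of $n_*$ forces the threshold condition at $n_*$ itself. The only item to handle carefully is existence of the $\argmin$, which is implicit in writing $\{n_* = n\}$; on the complementary event that no such minimizer exists the probability to be bounded is zero, so the inequality holds trivially.
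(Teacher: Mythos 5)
Your argument is correct and is essentially identical to the paper's own proof: the paper likewise notes that $\{n_* = n\}$ is contained in $\{b_1(n) < \tau\}$ and then applies the bound \eref{probineq} from Lemma \ref{problemma2}. Your extra remark about the trivial case where the $\argmin$ does not exist is a harmless refinement the paper leaves implicit.
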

\begin{proof}
It holds:
\begin{align*}
\PP \left\{ n_* = n \right\} &= \PP \left\{ b_1(n) < \tau \quad \text{and} \quad \forall_{m<n} b_1(n)>\tau \right\}\\
&\leq \PP \left\{ b_1(n) < \tau \right\} \leq c_1 \exp\(-c_2(n_{opt}-n)^2\) \exp(\tau^2).
\end{align*}
\end{proof}
The situation for $n>n_{opt}$ becomes much more complicated. It all depends on the question how fast the probabilities $\PP \left\{b_1(n)
\geq \tau \right\}$ and $\PP \left\{b_1(m)
\geq \tau \right\}$ decorrelate. Therefore, we will first present both the extreme cases and then discuss their implications.
\begin{lemma}
Assume assumption \ref{assumption:x} to be valid and $c_3$ defined as in lemma \ref{problemma2}.
Let $n>n_{opt}$. Then
\begin{equation}
\PP \left\{ n_* \in\{n_{opt}+1 \ldots n \} \right\} \leq c_3 \exp\(-\tau^2/4\).
\end{equation}
Assume additionally that for all $m<n$ it holds $\PP \left\{b_1(n) \geq \tau   \quad \text{and} \quad b_1(m) \geq \tau\right\} = \PP \left\{b_1(n)\geq \tau \right\} \PP \left\{b_1(m)\geq \tau \right\}$. Then
\begin{equation}
\PP \left\{ n_* = n \right\} \leq \( c_3 \exp\(-\tau^2/4\) \)^{n-n_{opt}}.
\end{equation}
\end{lemma}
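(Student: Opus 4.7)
Both bounds follow by including the events defining $n_*$ inside tail events for $b_1$ and invoking the single-index estimate~\eref{probineq2} from Lemma~\ref{problemma2}.

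For the first inequality, the plan is to chain the inclusions
\[ \{n_* \in \{n_{opt}+1, \ldots, n\}\} \subseteq \{n_* > n_{opt}\} = \bigcap_{m \leq n_{opt}} \{b_1(m) \geq \tau\}, \]
the equality being immediate from the definition of $n_*$ as the smallest index at which $b_1$ drops below $\tau$. I would then drop all but one factor of the intersection and apply \eref{probineq2}. The cleanest bookkeeping is to split the event as $\{n_* = n_{opt}+1\} \cup \{n_* \geq n_{opt}+2\}$, bounding the second piece directly by $\PP\{b_1(n_{opt}+1) \geq \tau\} \leq c_3\exp(-\tau^2/4)$, and absorbing the first piece into the constant $c_3$ via one extra step of the Lemma~\ref{problemma2} argument at the boundary index.

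For the second inequality, the event $\{n_* = n\}$ forces $b_1(m) \geq \tau$ for every $m < n$. The pairwise independence hypothesis, iterated, factorizes the probability of the intersection over the indices $m$ strictly between $n_{opt}$ and $n$ into a product of $\PP\{b_1(m) \geq \tau\}$ terms. Each factor is bounded by $c_3\exp(-\tau^2/4)$ via \eref{probineq2}, and collecting $n - n_{opt}$ such factors (again with an off-by-one handled by extending \eref{probineq2} down to $m = n_{opt}$) yields the geometric bound $(c_3\exp(-\tau^2/4))^{n-n_{opt}}$.

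The main obstacle, and the only delicate point, is bounding $b_1$ at the boundary index $m = n_{opt}$: the statement~\eref{probineq2} is restricted to $m > n_{opt}$ because $\|\VarX_{n_{opt}}-\VarX\|$ is not dominated by $\rho(n_{opt})$. A careful rerun of the proof of Lemma~\ref{problemma2} at $m = n_{opt}$, absorbing the surplus $\|\VarX_{n_{opt}} - \VarX\|$ contribution into the constant $c_3$, would provide the missing piece. Everything else is a subset-monotonicity step on top of the sub-Gaussian tail~\eref{sto}.
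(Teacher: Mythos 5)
Your overall structure (event inclusions into tail events of $b_1$, then Lemma \ref{problemma2}, then factorization under the independence hypothesis) is exactly the ``direct consequence'' route the paper intends, and the pieces you run at indices strictly above $n_{opt}$ are fine: $\{ n_* \geq n_{opt}+2 \} \subseteq \{ b_1(n_{opt}+1) \geq \tau \}$ gives $c_3\exp(-\tau^2/4)$, and the indices $n_{opt}+1,\ldots,n-1$ give $n-n_{opt}-1$ factors in the second claim. The genuine gap is precisely the point you defer: the boundary index $m=n_{opt}$. You assert that a ``careful rerun'' of Lemma \ref{problemma2} at $m=n_{opt}$ absorbs the surplus $\|\VarX_{n_{opt}}-\VarX\|$ into the constant, but nothing recorded in the paper makes that surplus a constant. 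Inequality \eref{impineq} is one-sided -- it says the bias shrinks by \emph{at least} the factor $w_1$ per step, so it does not prevent an arbitrarily sharp collapse between $n_{opt}$ and $n_{opt}+1$ -- and \eref{impineq3} bounds $\|\VarX_{n_{opt}}-\VarX\|$ from below, not above. Hence the ratio $\|\VarX_{n_{opt}}-\VarX\|/\rho(n_{opt}+1)$ is not a function of $w_1,w_2$, and $\PP\{b_1(n_{opt})\geq\tau\}$ -- equivalently $\PP\{n_*=n_{opt}+1\}$ -- can be close to $1$ at a fixed $\tau$ for a bias profile with such a sharp drop. So ``absorbing into $c_3$'' is not bookkeeping; it requires a new ingredient, namely a reverse inequality to \eref{impineq} of the form $\|\VarX-\VarX_{n+1}\|\geq w_0\|\VarX-\VarX_n\|$ (this is what the two-sided condition in Assumption \ref{assumption:x} provides via \cite{Bauer/Kindermann:2008}, but it is never stated in this paper), which yields $\|\VarX_{n_{opt}}-\VarX\|\leq w_0^{-1}\rho(n_{opt}+1)$ and then lets the Lemma \ref{problemma2} computation go through at $m=n_{opt}$ with an enlarged constant. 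Without it, your argument only delivers the weaker statements $\PP\{n_*\in\{n_{opt}+2,\ldots,n\}\}\leq c_3\exp(-\tau^2/4)$ and the exponent $n-n_{opt}-1$.

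Two smaller points. Even with the boundary repaired, your two-piece split gives a constant like $2\max\{c_3,c'\}$ rather than the $c_3$ of the statement, so the constant changes (the paper is cavalier about this as well, since its exponent $n-n_{opt}$ also implicitly uses the boundary index). And ``the pairwise independence hypothesis, iterated'' does not factorize a multi-way intersection: independence of $\{b_1(n)\geq\tau\}$ with each $\{b_1(m)\geq\tau\}$, $m<n$, says nothing about the joint law of the $b_1(m)$ among themselves; you must invoke the hypothesis at every level $n'$ with $n_{opt}<n'\leq n$ (or assume mutual independence outright), which is clearly what the author means but should be said rather than ``iterated''.
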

\begin{proof}
Direct consequence of lemma \ref{problemma2}.
\end{proof}
In the case of decorrelation, the likelihood of the event $n_* > n_{opt}$ decreases very fast, whereas in the worst case (i.e. the perfectly correlated case) the likelihood is constant. Please note that in any case we are in a considerably better situation than with  the Lepskij balancing principle, where, in dependence of an upper bound $N$, the probability is $N \exp(-\tau^2/4)$.

In reality we cannot expect complete decorrelation of events, however
considering practical observations the following relaxed condition seems to be reasonable:
\begin{assumption}\label{assumption:tau}
There exists a constant $C_3(w_1,w_2) > c_3$ such that for $n > n_*$
\begin{equation}\label{def:decor}
\PP \left\{ n_* = n \right\} \leq \( C_3 \exp\(-\tau^2/4\) \)^{n-n_{opt}}.
\end{equation}
Assume furthermore that $\tau$ big enough such that
\begin{equation}\label{def:decor2}
C_3 \exp\(-\tau^2/4\) w_2 < 1
\end{equation}
\end{assumption}
\begin{remark}
Obviously we have no justified way to find out which $\tau$ fulfills \eref{def:decor2}; however as $C_3$ just depends on $w_1$ and $w_2$ and not on $\delta$ we also have this independence from $\delta$ for $\tau$.

In most practical situations $\tau =1$ seems to be sufficient and a dependence of $\delta$ has not been observed (which might of course just be due to the limited range of numbers processable on modern computer hardware).
\end{remark}

\begin{theorem}
Assume that assumptions \ref{assumption:x} and \ref{assumption:tau} are valid.
Then we have
\begin{equation*}
\E \|\VarX_{n_*}^\delta - \VarX\|^2 \leq c_5 \( 1 +  c_6 \exp(\tau^2/2)  + c_5\frac{1}{1 - C_3
\exp\(-\tau^2/8\) w_2 } \) \( \|\VarX_{n_{opt}} - \VarX\|^2 + \rho(n_{opt})^2 \)
\end{equation*}
with variables $c_5$ and $c_6$ just depending on $w_1$ and $w_2$.
\end{theorem}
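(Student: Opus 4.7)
The plan is to decompose $\E\|\VarX_{n_*}^\delta - \VarX\|^2$ according to the value of $n_*$, giving
$$\E\|\VarX_{n_*}^\delta - \VarX\|^2 = \sum_{n} \E\bigl[\mathds{1}_{\{n_* = n\}} \|\VarX_n^\delta - \VarX\|^2\bigr].$$
I then split the integrand as $\|\VarX_n^\delta - \VarX\|^2 \leq 2\|\VarX_n - \VarX\|^2 + 2\|\Op_n^{-1}\xi\|^2$. The first, deterministic, piece contributes $2\PP(n_* = n)\|\VarX_n - \VarX\|^2$. For the stochastic piece the event $\{n_*=n\}$ is entangled with $\xi$, so I would apply Cauchy--Schwarz in the form
$$\E[\mathds{1}_{\{n_*=n\}}\|\Op_n^{-1}\xi\|^2] \leq \PP(n_*=n)^{1/2}\bigl(\E\|\Op_n^{-1}\xi\|^4\bigr)^{1/2},$$
and use that for Gaussian $\xi$ a direct SVD computation yields $\E\|\Op_n^{-1}\xi\|^4 \leq 3\rho(n)^4$.

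Next I split the summation into the three regimes $n < n_{opt}$, $n = n_{opt}$, and $n > n_{opt}$, and in each use the monotonicity relations $\|\VarX_n - \VarX\| \leq w_1^{-(n_{opt}-n)}\|\VarX_{n_{opt}} - \VarX\|$ resp.\ $\|\VarX_n - \VarX\| \leq \|\VarX_{n_{opt}} - \VarX\|$ from \eref{impineq}, together with $\rho(n) \leq \rho(n_{opt})$ resp.\ $\rho(n) \leq w_2^{n-n_{opt}}\rho(n_{opt})$ from \eref{impineq2}. The term $n = n_{opt}$ is trivially bounded by $2(\|\VarX_{n_{opt}} - \VarX\|^2 + \rho(n_{opt})^2)$ and produces the leading constant $1$ in the theorem. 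For $n < n_{opt}$ the preceding lemma yields $\PP(n_* = n) \leq c_1 \exp(-c_2(n_{opt}-n)^2)\exp(\tau^2)$; after Cauchy--Schwarz the resulting sum is dominated by a convergent Gaussian-type series of the shape $\sum_{k\geq 1} \exp(-c_2 k^2/2)w_1^{-2k}$, producing a finite constant times $\exp(\tau^2/2)$. For $n > n_{opt}$, Assumption \ref{assumption:tau} gives $\PP(n_*=n) \leq (C_3\exp(-\tau^2/4))^{n-n_{opt}}$, so the signal contribution sums as a geometric series, while the noise contribution, after Cauchy--Schwarz, yields a geometric series of ratio of order $C_3\exp(-\tau^2/8)w_2$, which is $<1$ by \eref{def:decor2}. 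This is precisely the source of the factor $1/(1 - C_3\exp(-\tau^2/8)w_2)$ in the theorem.

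The main obstacle is the regime $n > n_{opt}$: the noise size $\rho(n)^2$ grows like $w_2^{2(n-n_{opt})}$, and only the geometric decay of $\PP(n_* = n)$ keeps the sum finite. The Cauchy--Schwarz split is indispensable here because it applies to the noise piece, whose expectation one cannot simply factor out of $\{n_* = n\}$, and because it transforms the probability-decay exponent $\exp(-\tau^2/4)$ into $\exp(-\tau^2/8)$, matching the denominator stated in the theorem. The remaining work is routine book-keeping: combining the three regimes, using $\|\VarX_{n_{opt}} - \VarX\|^2 + \rho(n_{opt})^2$ as common factor, and absorbing numerical constants into $c_5$ and $c_6$.
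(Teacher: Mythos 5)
Your proof follows essentially the same route as the paper's: decompose the risk over the events $\{n_*=n\}$, decouple the indicator from the squared error by Cauchy--Schwarz using a fourth-moment bound for the Gaussian part, then sum the three regimes with the probability bounds of lemma \ref{problemma2} and assumption \ref{assumption:tau} --- a Gaussian-type series for $n<n_{opt}$ and a geometric series, with ratio controlled by \eref{def:decor2}, for $n>n_{opt}$. Your only deviation --- splitting bias and noise before applying Cauchy--Schwarz and using $\E\|\Op_n^{-1}\xi\|^4\le 3\rho(n)^4$ in place of the paper's moment-equivalence $\E\|\VarX-\VarX_n^\delta\|^4\le c_5^2\(\E\|\VarX-\VarX_n^\delta\|^2\)^2$ --- is cosmetic (for the bias term with $n<n_{opt}$ just note $\PP\le\PP^{1/2}$ to recover the stated $\exp(\tau^2/2)$ factor), and the bookkeeping of constants matches the paper's up to the same harmless absorptions.
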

\begin{proof}
Due to the independence of $\VarX$ and $\xi$ and an inequality connecting the different moments of Gaussian random variables (see e.g., \cite{Bauer/Reiss:2008}), we have for $c_5 = (4 \Gamma(3))^{1/4}$
\begin{align*}
\E \| \VarX - \VarX^\delta_n \|^4 &=
\| \VarX - \VarX_n \|^4 +
2 \| \VarX - \VarX_n \|^2\, \E \| \VarX_n - \VarX^\delta_n \|^2 +
\E \| \VarX_n - \VarX^\delta_n \|^4 \\
& \leq \| \VarX - \VarX_n \|^4 +
2 \| \VarX - \VarX_n \|^2\, \E \| \VarX_n - \VarX^\delta_n \|^2 +
(c_5\, \E \| \VarX_n - \VarX^\delta_n \|^2)^2 \\
& \leq \( \| \VarX - \VarX_n \|^2 + c_5\, \E \| \VarX_n - \VarX^\delta_n \|^2 \)^2 \\
& \leq c_5^2 \( \| \VarX - \VarX_n \|^2 + \E \| \VarX_n - \VarX^\delta_n \|^2 \)^2 \\
& = c_5^2 \(\E \| \VarX - \VarX^\delta_n \|^2\)^2.
\end{align*}
For an appropriate constant $c_6$ independent of $\tau$ it holds using \eref{impineq}, \eref{impineq2}, \eref{impineq3}, \eref{probineq}, \eref{def:decor} and the H\"older inequality:
\begin{align*}
\E \|\VarX_{n_*}^\delta - \VarX\|^2
=&  \sum_{n=1}^\infty \E \(\| \VarX - \VarX^\delta_n \|^2 \mathbf{1}_{n=n_*}\) \\
\leq &  \sum_{n=1}^{n_{opt}-1} \(\E \| \VarX - \VarX^\delta_n \|^4\)^{1/2} \(\E
\mathbf{1}_{n=n_*}^{2}\)^{1/2} \\
&\qquad+ \E \|\VarX_{n_{opt}}^\delta - \VarX\|^2 \\
&\qquad+ \sum_{n=n_{opt}+1}^{\infty} \(\E \| \VarX - \VarX^\delta_n \|^4\)^{1/2} \(\E
\mathbf{1}_{n=n_*}^{2}\)^{1/2} \\
\overset{\eref{probineq}\eref{def:decor}}{\leq} & \sum_{n=1}^{n_{opt}-1} c_5 \( \| \VarX - \VarX_n \|^2 +  \rho(n)^2 \)\(c_1 \exp\(-c_2(n_{opt}-n)^2/2\) \exp(\tau^2/2)\)\\
&\qquad+ \| \VarX - \VarX_{n_{opt}} \|^2 + \rho(n_{opt})^2 \\
&\qquad+ \sum_{n=n_{opt}+1}^{\infty}c_5 \(\| \VarX - \VarX_{n} \|^2 +  \rho(n)^2\) \(C_3 \exp\(-\tau^2/8\)\)^{n-n_{opt}}\\
\overset{\eref{impineq}\eref{impineq2}\eref{impineq3}}\leq & \sum_{n=1}^{n_{opt}-1} c_5 \( w_1^{-(n_{opt}-n)}\| \VarX - \VarX_{n_{opt}} \|^2 +  \rho(n_{opt})^2 \)\(c_1 \exp\(-c_2(n_{opt}-n)^2/2\) \exp(\tau^2/2)\)\\
&\qquad+ \| \VarX - \VarX_{n_{opt}} \|^2 + \rho(n_{opt})^2 \\
&\qquad+ \sum_{n=n_{opt}+1}^{\infty}c_5 \(\| \VarX - \VarX_{n_{opt}} \|^2 +   w_2^{n-n_{opt}} \rho(n_{opt})^2\) \(C_3 \exp\(-\tau^2/8\)\)^{n-n_{opt}}\\
\leq &  \| \VarX - \VarX_{n_{opt}} \|^2 c_5 \(1 + c_6 \exp(\tau^2/2) + \frac{1}{1 - C_3\exp\(-\tau^2/8\) } \) \\
&\qquad+ \rho(n_{opt})^2 c_5 \( 1 +  c_6 \exp(\tau^2/2)  + c_5 \frac{1}{1 - C_3
\exp\(-\tau^2/8\) w_2 } \),
\end{align*}
which proves the assertion.
\end{proof}
Even if we cannot assume decorrelation (assumption \ref{assumption:tau}), we still have a convergence
result due to $n_L(\tau) \geq \min \{N,n_*(\tau)\}$, also compare \cite{Raus/Haemarik:2008}.
\begin{theorem}Assume that assumptions \ref{assumption:x} to be valid.
Let $n_* = \min \{N,n_*(\tau)\}$ and $\tau = c \log(\delta^-1)$. Then it holds
\begin{equation*}
\E \|\VarX_{n_L}^\delta - \VarX\|^2 \leq C_L  \( \|\VarX_{n_{opt}} - \VarX\|^2 +
\log(\delta^{-1}) \rho(n_{opt})^2 \)
\end{equation*}
and
\begin{equation*}
\E \|\VarX_{n_*}^\delta - \VarX\|^2 \leq C_*  \( \|\VarX_{n_{opt}} - \VarX\|^2 +
\log(\delta^{-1}) \rho(n_{opt})^2 \).
\end{equation*}
\end{theorem}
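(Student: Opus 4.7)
The plan is to mirror the argument of the previous theorem, but to replace the role of Assumption \ref{assumption:tau} on the over-regularised side by a crude union bound that becomes sharp once $\tau$ is forced to grow logarithmically in $\delta^{-1}$. Concretely, I would expand
\begin{equation*}
\E\|\VarX_{n_L}^\delta - \VarX\|^2 = \sum_{n=1}^{N} \E\left(\|\VarX_n^\delta - \VarX\|^2 \mathbf{1}_{\{n_L = n\}}\right),
\end{equation*}
apply Cauchy--Schwarz with the fourth-moment inequality $\E\|\VarX - \VarX_n^\delta\|^4 \leq c_5^2 (\|\VarX - \VarX_n\|^2 + \rho(n)^2)^2$ derived in the proof of the previous theorem, and split the index range at $n < n_{opt}$, $n = n_{opt}$, and $n > n_{opt}$.

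For $n < n_{opt}$, the event $\{n_L = n\}$ forces $b_1(n) < \tau$, so \eref{probineq} from Lemma \ref{problemma2} yields $\PP\{n_L = n\}^{1/2} \leq \sqrt{c_1}\exp(-c_2(n_{opt}-n)^2/2)\exp(\tau^2/2)$. Together with $\|\VarX - \VarX_n\|^2 \leq w_1^{-2(n_{opt}-n)}\|\VarX - \VarX_{n_{opt}}\|^2$ from \eref{impineq} and $\rho(n) \leq \rho(n_{opt})$ from \eref{impineq2}, I would split the sum at a window $n_{opt} - n \sim K\tau$ with $K$ chosen so that the Gaussian weight $\exp(-c_2(n_{opt}-n)^2/2)$ dominates both $\exp(\tau^2/2)$ and $w_1^{-2(n_{opt}-n)}$ beyond the window, producing a summable tail controlled by a constant multiple of $\|\VarX - \VarX_{n_{opt}}\|^2 + \rho(n_{opt})^2$. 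Inside the window I would fall back on the trivial bound $\PP(\cdot) \leq 1$ over the $O(\log \delta^{-1})$ many indices; combined with monotonicity $\|\VarX - \VarX_n\| \leq \|\VarX\|$ and $\rho(n) \leq \rho(n_{opt})$, this produces the single $\log \delta^{-1}$ factor that multiplies $\rho(n_{opt})^2$ in the final bound.

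For $n > n_{opt}$, where no decorrelation is available, I would invoke $\PP\{n_L = n\} \leq \PP\{b_1(n-1) \geq \tau\} \leq c_3 \exp(-\tau^2/4)$ from \eref{probineq2} together with $\|\VarX - \VarX_n\|^2 + \rho(n)^2 \leq \|\VarX\|^2 + w_2^{2(n-n_{opt})}\rho(n_{opt})^2$. Summing over $n \in (n_{opt}, N]$ gives a prefactor of order $N w_2^{2N}\sqrt{c_3}\exp(-\tau^2/8)$; with $N$ of the usual order $O(\log \delta^{-1})$ (machine precision) and $\tau = c\log \delta^{-1}$, the super-polynomial decay $\exp(-\tau^2/8) = \delta^{c^2 \log(\delta^{-1})/8}$ swamps every polynomial in $\delta^{-1}$, so the whole tail is absorbed into a constant. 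The centre term is handled by the orthogonality identity $\E\|\VarX_{n_{opt}}^\delta - \VarX\|^2 = \|\VarX_{n_{opt}} - \VarX\|^2 + \rho(n_{opt})^2$. The second inequality, for $\min\{N, n_*(\tau)\}$, follows by exactly the same three-range decomposition with $\{n_L = n\}$ replaced by $\{n_*(\tau) = n\}$, since Lemma \ref{problemma2} controls the latter by the same expressions.

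The step I expect to be hardest is the calibration of the window $K\tau$ on the under-regularised side: the window must be wide enough for the Gaussian factor $\exp(-c_2(n_{opt}-n)^2/2)$ to defeat $\exp(\tau^2/2)$ beyond it, yet the number of indices inside must stay linear in $\log \delta^{-1}$ so that the in-window bound delivers only the single logarithmic factor that appears in the statement rather than a power of it. Once this calibration is fixed, the constants $C_L$ and $C_*$ can be read off uniformly in $\delta$, since $w_1, w_2$ are problem parameters and $c_1, c_2, c_3$ inherit this $\delta$-independence from Lemma \ref{problemma2}.
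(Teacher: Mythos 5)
Your overall strategy does not match the paper's, and the place where it departs is exactly where it breaks. The paper does not re-prove the bound for $n_L$ at all: that inequality is quoted from \cite{Bauer/Pereverzev:2005}, and the bound for $\min\{N,n_*(\tau)\}$ is then obtained by a two-line reduction, combining the ordering $n_L(\tau,N)\geq\min\{N,n_*(\tau)\}$ with the fact that the only new risk for the fast method, namely stopping too early, was already bounded in the preceding theorem without invoking assumption \ref{assumption:tau}. Your plan instead re-derives both inequalities from the pointwise probability bounds of lemma \ref{problemma2}, and this fails on the under-regularized side. With $\tau=c\log(\delta^{-1})$ the factor $\exp(\tau^2/2)$ coming from \eref{probineq} via Cauchy--Schwarz is super-polynomially large in $\delta^{-1}$, so the Gaussian weight only helps when $n_{opt}-n$ exceeds a fixed multiple of $\tau$; inside your window of width $\sim K\tau$ the bound \eref{probineq} is vacuous, and your fallback $\PP\leq1$ together with $\|\VarX-\VarX_n\|\leq\|\VarX\|$ produces a term of order $\log(\delta^{-1})\,\|\VarX\|^2$. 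This is not dominated by $C\,(\|\VarX_{n_{opt}}-\VarX\|^2+\log(\delta^{-1})\rho(n_{opt})^2)$ for any $\delta$-independent $C$, since the right-hand side tends to $0$ as $\delta\to0$ while $\|\VarX\|$ is a fixed positive constant; replacing the trivial bound by \eref{impineq} is no better, because $w_1^{-(n_{opt}-n)}$ over a window of width proportional to $\log(\delta^{-1})$ contributes a factor that is polynomial in $\delta^{-1}$. So the step you flag as hardest (calibrating the window) is not a calibration issue: no choice of $K$ makes the in-window contribution come out as the single logarithmic factor in the statement with these tools alone.

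The over-regularized part of your argument is sound in spirit: the union bound with $\exp(-\tau^2/4)$ super-polynomially small in $\delta^{-1}$ indeed swallows the $N$ and $w_2^{2(n-n_{opt})}$ factors for the truncated parameter, and this is close to what the previous theorem already provides. But to close the proof you either need the good-event/maximal-inequality machinery of the cited Lepskij analysis to handle the indices just below $n_{opt}$, or you should follow the paper and simply take the $n_L$ inequality as given from \cite{Bauer/Pereverzev:2005} and deduce the second inequality from $n_L(\tau,N)\geq\min\{N,n_*(\tau)\}$ together with the early-stopping bound established in the previous theorem (which does not rest on the decorrelation assumption).
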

\begin{proof}
The first part was proven in \cite{Bauer/Pereverzev:2005}; the second part is
trivial, using that on the one hand $n_L(\tau) \geq \min \{N,n_*(\tau)\}$. On the
other hand, the risk for $n_*$ being too small (which is not affected by the decorrelation effect) can be bounded from above by a multiple of
$\|\VarX_{n_{opt}} - \VarX\|^2 + \rho(n_{opt})^2$ as shown in the last theorem.
\end{proof}

\section{Conclusion}

In numerical experiments \cite{Bauer/Lukas:2010} it is almost impossible to distinguish the results of the
Lepskij balancing principle and the newly introduced fast balancing. This can be
interpreted as follows:
\begin{itemize}
\item There is just a very low probability for outliers in the noise which influence the noise behavior $\rho(\cdot)$ after the optimal regularization parameter.
\item There is an extremely low probability that the noise modifies the data in such a way that one stops too early.
\end{itemize}
However, considering the computation time, the new method has big advantages, it can
compete easily with other methods like the Morozov discrepancy principle. 

In conclusion, we have shown that this modification of the Lepskij balancing principle is
very well suited for practice and should replace it in all time-critical applications
as long as one does not need to fear big frequency gaps in the solutions.

This analysis has been done only for Tikhonov regularization. A similar analysis imposing stricter requirements on the solution $\VarX$ was performed for truncated Singular Value Decomposition (TSVD) in \cite{Bauer:2009}.

Furthermore, large numerical experiments \cite{Bauer/Lukas:2010} show that the newly defined method works very
well and, in contrast to most other parameter choice regimes, can cope with colored
noise without any performance loss. In these experiments it was observed that the factor
$C$ in the oracle inequality is at most around $2$. The method is very stable, i.e., the
number of observed outliers is very low, both for Tikhonov and Spectral-Cut-Off
regularization. This behavior does not change when one replaces the exact noise behavior
by an estimation based on several measurements \cite{Bauer:2009}.

\section*{Acknowledgements}

The author gratefully acknowledges the financial support by the Upper
Austrian Technology and Research Promotion.

\section*{References}
\bibliographystyle{amsalpha}
\bibliography{bibliography}

\end{document}